\title{Von Neumann Equivalence and Group Approximation Properties}
\author{Ishan Ishan}
\address{Department of Mathematics, Vanderbilt University, 1326 Stevenson Center, Nashville, TN 37240, USA}
\email{ishan.ishan@vanderbilt.edu}
\newtheorem{thm}{Theorem}[section]
\newtheorem{prop}[thm]{Proposition}
\newtheorem{cor}[thm]{Corollary}
\newtheorem{lem}[thm]{Lemma}
\theoremstyle{definition}
\newtheorem{defn}[thm]{Definition}
\newtheorem{defn/lem}[thm]{Definition/Lemma}
\newtheorem{rem}[thm]{Remark}
\newcommand{\cB}{{\mathcal B}}
\newcommand{\cH}{{\mathcal H}}
\newcommand{\cM}{{\mathcal M}}
\newcommand{\cU}{{\mathcal U}}
\newcommand{\Tr}{\operatorname{Tr}}
\newcommand{\actson}{{\, \curvearrowright \,}}
\newcommand{\aactson}[1]{{\, \curvearrowright^{#1} \,}}
\DeclareRobustCommand\frownotimes{\mathbin{\mathpalette\frown@otimes\relax}}
\newcommand{\frown@otimes}[2]{%
  \vbox{
    \ialign{##\cr
      \hidewidth$\m@th#1{}_\frown$\kern-\scriptspace\hidewidth\cr
      \noalign{\nointerlineskip\kern-1pt}
      $\m@th#1\otimes$\cr
    }%
  }%
}
\begin{document}
\begin{abstract}
The notion of von Neumann equivalence (vNE), which encapsulates both measure equivalence and $W^*$-equivalence, was introduced recently in \cite{IPR}, where it was shown that  many analytic properties, such as amenability, property (T), the Haagerup property, and proper proximality are preserved under von Neumann equivalence. In this article, we expand on the list of properties that are stable under von Neumann equivalence, and prove that weak amenability, weak Haagerup property, and the approximation property (AP) are von Neumann equivalence invariants. In particular, we get that (AP) is stable under measure equivalence. Furthermore, our techniques give an alternate proof for vNE-invariance of the Haagerup property.
\end{abstract}

\maketitle


\section{Introduction}

Two infinite countable discrete groups $\Gamma$ and $\Lambda$ are \textit{measure equivalent} if there is a $\sigma$-finite measure space $(\Omega,m)$ with a measurable, measure-preserving action of $\Gamma\times\Lambda$, so that both the actions $\Gamma\actson(\Omega,m)$ and $\Lambda\actson(\Omega,m)$ admit finite-measure fundamental domains $Y,X\subset\Omega$:
\begin{align*}
\Omega=\bigsqcup_{\gamma\in\Gamma}\gamma Y=\bigsqcup_{\lambda\in\Lambda}\lambda X.
\end{align*}
This notion was introduced by Gromov in \cite[0.5.E]{Gro91} in analogy
with the topological notion of quasi-isometry for finitely generated groups and is fundamental in modern ergodic theory, especially to the study of measured group theory and
orbit equivalence. This has been investigated intensively over the years through an entire array
of techniques and many groundbreaking results have been discovered \cite{OW80, Fu99m, Fu99o, Ki10, Ki11}.
Moreover, striking connections with classical invariants in group theory have been established
\cite{Ga10, Ga02i}.

Two groups $\Gamma$ and $\Lambda$ are \textit{W$^*$-equivalent} if they have isomorphic group von Neumann algebras, ($L\Gamma\simeq L\Lambda$). This concept plays a key role in the classification of von Neumann algebras and has been studied with great success over the last decades:  Connes' work on amenability \cite{Con76}, Jones' finite index theory \cite{Jones83}, Voiculescu's free
probability \cite{Voi95, Voi96}, and more recently Popa's deformation/rigidity theory \cite{Pop07}.

Over the years, many discoveries hinted towards the notion of W$^*$-equivalence being somewhat analogous to measure equivalence as both equivalence relations preserve many of the same ``approximation type" properties. Both equivalence relations are rather coarse,
especially in the amenable case (there is a single W$^*$-equivalence class of infinite amenable ICC countable groups, and a single measure equivalence class of infinite amenable groups); both behave well with respect to approximation properties (or lack thereof), thus preserving the Haagerup property, weak amenability or Kazdhan property (T) for groups. These similarities led Shlyakhtenko to ask whether measure equivalence implied $W^*$-equivalence in the setting of ICC groups. It was shown in \cite{ChIo11} that this is not the case, although the converse implication of whether $W^*$-equivalence implies measure equivalence is still open.  

Recently, in \cite{IPR}, the following natural generalization of measure equivalence was introduced in the non-commutative setting. A \textit{fundamental domain} for the action $\Gamma\actson^\sigma\cM$ of a discrete group $\Gamma$ on a von Neumann algebra $\cM$ is a projection $p\in\cM$ such that $\sum_{\gamma\in\Gamma}\sigma_\gamma(p)=\mathbf{1}$, where the sum converges in the strong operator topology. Two groups $\Gamma$ and $\Lambda$ are \textit{von Neumann equivalent}, denoted $\Gamma\sim_{vNE}\Lambda$, if there exists a semi-finite von Neumann algebra $\cM$ with a faithful normal semi-finite trace $\Tr$, and trace-preserving commuting actions $\Gamma\actson\cM, \Lambda\actson\cM$ such that the actions of $\Gamma$ and $\Lambda$ individually admit a finite-trace fundamental domain. Von Neumann equivalence encapsulates both measure equivalence and W$^*$-equivalence. The former is straightforward to see by restricting to the case when $\cM$ is abelian, and to see later let $\theta:L\Gamma\to L\Lambda$ be a $*$-isomorphism, and consider $\cM=\cB(\ell^2\Lambda)$, where one has a trace-preserving action $\sigma:\Gamma\times\Lambda\to\mathrm{Aut}(\cM)$ given by $\sigma_{(s,t)}(T)=\theta(\lambda_s)\rho_tT\rho^*_t\theta(\lambda_s^*)$, where $\rho:\Lambda\to\cU(\ell^2\Lambda)$ is the right regular representation of $\Lambda$, and $\lambda:\Gamma\to\cU(\ell^2\Gamma)$ is the left regular representation of $\Gamma$. One can now see that the rank-one projection onto the subspace $\mathbb{C}\delta_e$ is a common fundamental domain for the actions of both $\Gamma$ and $\Lambda$.

 By developing an analysis of fundamental domains for actions of groups on semi-finite von Neumann algebra, and combining it with the language of operator spaces and Hilbert $C^*$-modules, we introduced a general procedure in \cite{IPR} for inducing actions and unitary representations in this setting. We use this to show that many analytic properties, such as amenability, property (T), the Haagerup property, and proper proximality are preserved under von Neumann equivalence.

Invariance of the Cowling-Haagerup constant under measure equivalence was proved in \cite{Jo14}, where the author extended the result of Cowling and Zimmer \cite{CZ89} from orbit equivalence to measure equivalence. With the success of induction techniques developed in \cite{IPR} to prove stability of many approximation type properties under von Neumann equivalence, it is a natural problem to investigate the behavior of Cowling-Haagerup constant under von Neumann equivalence. In this article, we give a positive result in this direction and prove the stability of (finite) Cowling-Haagerup constant (and hence weak amenability) under von Neumann equivalence. In the process, we discover an alternate proof for the invarinace of the Haagerup property under von Neumann equivalence. We also prove that the weak Haagerup property \cite{Kn16}, and the approximation property (AP) \cite{HaKr94} are also preserved under von Neuamman equivalence. In particular, (AP) is preserved under measure equivalence, a result that has not yet appeared in the literature.

\begin{thm}
\label{main theorem}
Weak amenability, weak Haagerup property, approximation property (AP), and the Haagerup property are von Neumann equivalence invariants.
\end{thm}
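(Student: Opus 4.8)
\emph{Proof strategy.} The common thread is that each of the four properties of a group $\Gamma$ is detected by a net of completely bounded Fourier multipliers---equivalently, of Herz--Schur multipliers---on $\Gamma$, converging to the constant function $1$ and subject to one extra constraint. Concretely: $\Gamma$ has the Haagerup property iff there is a net of positive-definite $\varphi_i\in c_0(\Gamma)$ with $\varphi_i\to 1$ pointwise; $\Gamma$ has the weak Haagerup property iff there is such a net in $c_0(\Gamma)\cap M_{cb}A(\Gamma)$ with positivity replaced by $\sup_i\|\varphi_i\|_{cb}<\infty$; $\Gamma$ is weakly amenable with $\Lambda_{cb}(\Gamma)\le C$ iff there is a net of finitely supported functions (equivalently, of elements of $A(\Gamma)$) with $\varphi_i\to 1$ pointwise and $\limsup_i\|\varphi_i\|_{cb}\le C$; and $\Gamma$ has (AP) iff there is a net of finitely supported functions converging to $1$ in the weak-$*$ topology $\sigma(M_{cb}A(\Gamma),Q(\Gamma))$ on the completely bounded multiplier space, i.e.\ iff $1$ lies in the weak-$*$ closure of $A(\Gamma)$ in $M_{cb}A(\Gamma)$. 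The plan is to produce, from an arbitrary von Neumann equivalence coupling, a single linear transfer map $\Theta$ carrying cb multipliers of $\Gamma$ to cb multipliers of $\Lambda$ which simultaneously (i)~does not increase $\|\cdot\|_{cb}$, (ii)~preserves positive-definiteness, (iii)~sends $c_0(\Gamma)$ into $c_0(\Lambda)$ and finitely supported functions into $A(\Lambda)$, (iv)~is weak-$*$ continuous, and (v)~carries a net tending pointwise to $1$ on $\Gamma$ to one tending pointwise to $1$ on $\Lambda$. Given such a $\Theta$ in each direction, all four statements transfer, and since von Neumann equivalence is symmetric this yields invariance---and, in fact, equality of the Cowling--Haagerup and of the weak Haagerup constants.

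First I would build $\Theta$ from the induction machinery of \cite{IPR}. Fix a coupling: a semifinite $(\cM,\Tr)$ with commuting trace-preserving actions of $\Gamma$ and $\Lambda$ and finite-trace fundamental domains $p$ (for $\Gamma$) and $q$ (for $\Lambda$), normalized so that $\Tr(p)=1$. Given $\varphi\in M_{cb}A(\Gamma)$ with an essentially optimal Gilbert / Bo{\.z}ejko--Fendler factorization $\varphi(s^{-1}t)=\langle\xi(t),\eta(s)\rangle$, $\xi,\eta\colon\Gamma\to\HH$ bounded with $\|\xi\|_\infty\|\eta\|_\infty\le\|\varphi\|_{cb}+\eps$, one induces $\xi$ and $\eta$ through the coupling exactly as \cite{IPR} induces a matrix coefficient of a unitary representation: spread $\xi,\eta$ over the fundamental domain of $\Gamma$ and transport them by the $\Lambda$-action to get bounded maps $\tilde\xi,\tilde\eta\colon\Lambda\to\HH\ovt L^2(p\cM)$. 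Because $\Tr(p)=1$ the averaging over the fundamental domain is contractive, so $\|\tilde\xi\|_\infty\le\|\xi\|_\infty$ and $\|\tilde\eta\|_\infty\le\|\eta\|_\infty$; hence $\Theta\varphi(\mu^{-1}\nu):=\langle\tilde\xi(\nu),\tilde\eta(\mu)\rangle$ should be a Herz--Schur multiplier on $\Lambda$ with $\|\Theta\varphi\|_{cb}\le\|\varphi\|_{cb}+\eps$, and when $\xi=\eta$ one may take $\tilde\xi=\tilde\eta$, so $\Theta\varphi$ is positive definite. Unwinding this---equivalently, using that inducing a representation and pairing against the spread constant vector yields a coefficient of the induced representation---gives the explicit formula $\Theta\varphi(\mu)=\sum_{\gamma\in\Gamma}c_{\gamma,\mu}\,\varphi(\gamma)$, where the $c_{\gamma,\mu}\ge 0$ are traces of products of $\Gamma$- and $\Lambda$-translates of $p$ and $q$, and $\sum_\gamma c_{\gamma,\mu}=\Tr(p)=1$ for every $\mu$; in particular $\Theta$ is linear and independent of the chosen factorization.

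These features yield (iii) and (v). For (iii), the key observation is that for each fixed $\gamma$ the scalar function $\mu\mapsto c_{\gamma,\mu}$ is, by construction, a matrix coefficient of the Koopman representation of $\Lambda$ on $L^2(\cM)$ against the $L^2$-vectors $p$ and $\sigma^\Gamma_{\gamma^{-1}}(p)$ (which lie in $L^2(\cM)$ precisely because $\Tr(p)<\infty$); since the $\Lambda$-fundamental domain $q$ has finite trace, that Koopman representation is a multiple of the left regular representation $\lambda_\Lambda$---the von-Neumann-algebraic form of the fact from \cite{IPR} that inducing the regular representation returns a multiple of the regular representation---so $\mu\mapsto c_{\gamma,\mu}$ lies in $A(\Lambda)\subseteq c_0(\Lambda)$. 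Hence if $\varphi$ is finitely supported then $\Theta\varphi=\sum_{\gamma\in\supp\varphi}\varphi(\gamma)\,c_{\gamma,\cdot}$ is a finite sum of elements of $A(\Lambda)$, so $\Theta\varphi\in A(\Lambda)$; and for a general $\varphi\in c_0(\Gamma)$ one gets $\Theta\varphi\in c_0(\Lambda)$ by splitting off a finite set $F\subseteq\Gamma$ (whose contribution tends to $0$ at infinity, being a finite sum of $c_0(\Lambda)$-functions) and estimating the tail by $\sup_{\gamma\notin F}|\varphi(\gamma)|\cdot\sum_\gamma c_{\gamma,\mu}=\sup_{\gamma\notin F}|\varphi(\gamma)|$. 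For (v): if $\varphi_i\to 1$ pointwise then $\Theta\varphi_i(\mu)=\sum_\gamma c_{\gamma,\mu}\varphi_i(\gamma)\to\sum_\gamma c_{\gamma,\mu}=1$ for every $\mu$, by dominated convergence against the summable weights $(c_{\gamma,\mu})_\gamma$. Combining this with $\|\Theta\varphi\|_{cb}\le\|\varphi\|_{cb}+\eps$ and the preservation of positivity settles three of the four cases at once: the Haagerup property (keep positivity and use the $c_0$ bound---this is the advertised alternative proof), the weak Haagerup property (use the $c_0$ bound and the cb estimate), and weak amenability, with $\Lambda_{cb}(\Lambda)\le\Lambda_{cb}(\Gamma)$ since the finitely supported test functions of $\Gamma$ are sent to $A(\Lambda)$-test functions of $\Lambda$; running the argument in the other direction and invoking symmetry gives the reverse inequalities, hence equality of the Cowling--Haagerup and weak Haagerup constants.

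The remaining, and I expect most delicate, case is (AP), where there is no norm to pass to a limit and one must instead establish (iv): that $\Theta$ extends to a bounded linear map $M_{cb}A(\Gamma)\to M_{cb}A(\Lambda)$ continuous from $\sigma(M_{cb}A(\Gamma),Q(\Gamma))$ to $\sigma(M_{cb}A(\Lambda),Q(\Lambda))$ on norm-bounded sets---which, by the Krein--Smulian theorem, suffices to make $\Theta$ the adjoint of a bounded map on the preduals. The natural route is to realize cb multipliers of $\Gamma$ as normal completely bounded Fourier multipliers of $L\Gamma$ (or of the fundamental-domain compression of $\cM\rtimes\Gamma$), observe that the induction of \cite{IPR} at the level of normal cb maps is a composition of weak-$*$ continuous operations---amplification, compression by $p$ and $q$, and a normal conditional expectation attached to the fundamental domain---hence weak-$*$ continuous, and check that it carries Fourier multipliers to Fourier multipliers and, by the previous paragraph, $A(\Gamma)$ into $A(\Lambda)$. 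Applying $\Theta$ (normalized so $\Theta 1=1$) to an (AP) net for $\Gamma$ then exhibits $1$ in the weak-$*$ closure of $A(\Lambda)$ inside $M_{cb}A(\Lambda)$, i.e.\ (AP) for $\Lambda$; symmetry finishes the proof. Thus the real difficulty is not in any single property but in the uniform construction---upgrading the representation-theoretic induction of \cite{IPR} to a single completely bounded transfer map $\Theta$ for which the cb-norm bound, the fact that $\Theta\varphi$ is again a genuine Fourier multiplier with the $c_0$/$A(\Lambda)$ bookkeeping governed by the finite-trace fundamental domains, and the weak-$*$ continuity all hold simultaneously.
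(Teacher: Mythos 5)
Your proposal is correct and follows essentially the same route as the paper: it induces Herz--Schur multipliers through the coupling by spreading a Gilbert-type factorization over the translates of the finite-trace fundamental domain, yielding the same averaging formula $\hat\varphi(\gamma)=\frac{1}{\Tr(p)}\sum_{s}\varphi(s)\Tr(\sigma_s(p)\sigma_{\gamma^{-1}}(p))$ with positive, summable weights, and then uses that the Koopman representation is a multiple of the regular representation to place the induced multipliers in $A[\Gamma]$ (resp.\ $c_0(\Gamma)$). The only divergences are cosmetic bookkeeping: you handle the $c_0$-statement by a finite-set/tail estimate where the paper uses a compact-operator lemma, and you argue weak-$*$ continuity for (AP) via Krein--Smulian rather than by directly checking that the predual map carries $\ell^1\Gamma$ into $\ell^1\Lambda$ and hence $Q(\Gamma)$ into $Q(\Lambda)$.
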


In order to prove Theorem \ref{main theorem} we introduce a general induction procedure for inducing Herz-Shur multipliers via von Neumann equivalence from $\Lambda$ to $\Gamma$. Our approach is inspired by Jolissaint's work in \cite{Jo14}, where some similar results are shown in the setting of measure equivalence. 

Using Popa and Vaes' result \cite[Theorem 5.1]{PV14}, Cartan rigidity results are established in \cite[Theorem 4.17]{BIP18} for groups that are both properly proximal and weakly amenable. Combining these with Theorem \ref{main theorem} above and \cite[Theorem 1.3]{IPR}, we immediately obtain the following.

\begin{cor}
Suppose $\Gamma$ is countable discrete group which is properly proximal and weakly amenable. If $\Lambda$ is any group von Neumann equivalent to $\Gamma$, then $\Lambda$ is Cartan-rigid, i.e., $L^\infty(X,\mu)\rtimes\Lambda$ admits $L^\infty(X,\mu)$ as its unique Cartan subalgebra, up to unitary conjugacy, for any free ergodic p.m.p. action $\Lambda\actson (X,\mu)$.
\end{cor}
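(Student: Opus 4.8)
The plan is to obtain the corollary purely by assembling the invariance results already established with the Cartan-rigidity criterion of \cite[Theorem 4.17]{BIP18}. The key observation is that both hypotheses required by that criterion---proper proximality and weak amenability---are von Neumann equivalence invariants, so each passes from $\Gamma$ to any group in its vNE class; since von Neumann equivalence is a symmetric relation, the direction of transfer is immaterial, and in particular both properties descend to $\Lambda$ as soon as $\Lambda \sim_{vNE} \Gamma$.

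Concretely, I would argue as follows. Because $\Gamma$ is weakly amenable and $\Lambda \sim_{vNE} \Gamma$, Theorem \ref{main theorem} shows that $\Lambda$ is weakly amenable. Because $\Gamma$ is properly proximal and $\Lambda \sim_{vNE} \Gamma$, \cite[Theorem 1.3]{IPR} shows that $\Lambda$ is properly proximal. Hence $\Lambda$ is a countable discrete group that is simultaneously properly proximal and weakly amenable, which is exactly the standing hypothesis of \cite[Theorem 4.17]{BIP18}. Applying that theorem---which itself rests on the unique-Cartan machinery of Popa and Vaes in \cite[Theorem 5.1]{PV14}---yields that for every free ergodic p.m.p.\ action $\Lambda \actson (X,\mu)$ the crossed product $L^\infty(X,\mu) \rtimes \Lambda$ admits $L^\infty(X,\mu)$ as its unique Cartan subalgebra up to unitary conjugacy, which is the desired conclusion.

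The only substantive point requiring care, and thus the place I would expect the mild obstacle to sit, is matching the precise flavor of weak amenability produced by Theorem \ref{main theorem} with the one demanded in \cite[Theorem 4.17]{BIP18}; the latter exploits the weak compactness of the action that is available when the Cowling--Haagerup constant is finite. Since Theorem \ref{main theorem} is proved by transporting Herz--Schur multipliers through the von Neumann coupling and thereby preserving the (finite) Cowling--Haagerup constant, this compatibility is automatic, and no work beyond bookkeeping of the constant is needed. This is why the corollary follows \emph{immediately} from the combination of the three cited inputs.
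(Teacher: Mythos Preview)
Your argument is correct and mirrors the paper's own justification: the corollary is obtained immediately by combining Theorem~\ref{main theorem} (weak amenability is a vNE invariant), \cite[Theorem 1.3]{IPR} (proper proximality is a vNE invariant), and \cite[Theorem 4.17]{BIP18} (properly proximal plus weakly amenable implies Cartan-rigid, via \cite[Theorem 5.1]{PV14}). No additional steps are needed.
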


\textbf{Acknowledgments.} The author is grateful to Jesse Peterson for suggesting the problem and making several helpful comments and suggestions. 

\section{Preliminaries and Notations}
We setup the notations and collect some facts that will be needed in this article. For facts regarding semi-finite von Neumann algebras, we refer the reader to \cite{Ta02} for their proofs, whereas those regarding multipliers on a group can be found in \cite{CH89, Jo00, Jo14}. The identity element in a von Neumann algebra $\cM$ will be denoted by $\mathbf{1}$. 

\subsection{Semi-finite traces}
A \textit{trace} on a von Neumann algebra $\cM$ is a function $\Tr$ on the positive cone $\cM_+$ with values in the extended reals $[0,\infty]$ satisfying the following conditions:
\begin{itemize}
\item[(i)] $\Tr(x+y)=\Tr(x)+\Tr(y), ~~~x,y\in\cM_+$,
\item[(ii)] $\Tr(\alpha x)=\alpha\Tr(x), \alpha\geq 0,~~~ x\in\cM_+$,
\item[(iii)] $\Tr(x^*x)=\Tr(xx^*),~~~ x\in\cM$.
\end{itemize}
A trace $\Tr$ is said to be \textit{faithful} if $\Tr(x)>0$ for any non-zero $x\in\cM_+$, \textit{semi-finite} if for every non-zero $x\in\cM_+$ there exists a non-zero $y\in\cM_+, y\leq x$ with $\Tr(y)<\infty$, \textit{finite} if $\Tr(\mathbf{1})<\infty$, and \textit{normal} if $\Tr(\sup_ix_i)=\sup_i\Tr(x_i)$ for every bounded increasing net $\{x_i\}$ in $\cM_+$. A separable von Neumann algebra $\cM$ is \textit{semi-finite} if and only if it admits a faithful normal semi-finite trace. If $\mathcal M$ is a semi-finite von Neumann algebra with a faithful normal semi-finite trace ${\rm Tr}$, we set $\mathfrak n_{\rm Tr} = \{ x \in \mathcal M \mid {\rm Tr}(x^*x) < \infty \}$, and $\mathfrak m_{\rm Tr} = \{ \sum_{j = 1}^n x_j^* y_j \mid x_j, y_j \in \mathfrak n_{\rm Tr}, 1 \leq j \leq n \}$. Both $\mathfrak n_{\rm Tr}$ and $\mathfrak m_{\rm Tr}$ are ideals in $\mathcal M$, and the trace ${\rm Tr}$ extents to a $\mathbb C$-valued linear functional on $\mathfrak m_{\rm Tr}$, which is called the \textit{definition ideal} of ${\rm Tr}$.

We let $L^1(\mathcal M, {\rm Tr})$ denote the completion of $\mathfrak m_{\rm Tr}$ under the norm $\| a \|_1 = {\rm Tr}( | a |)$, and then the bilinear form $\mathcal M \times \mathfrak m_{\rm Tr} \ni (x, a) \mapsto {\rm Tr}(xa)$ extends to the duality between $\mathcal M$ and $L^1(\mathcal M, {\rm Tr})$ so that we may identify $L^1(\mathcal M, {\rm Tr})$ with $\mathcal M_*$. We also let $L^2(\mathcal M, {\rm Tr})$ denote the Hilbert space completion of $\mathfrak n_{\rm Tr}$ under the inner product $\langle a, b \rangle_{\Tr} = {\rm Tr}(b^* a)$. Left multiplication of $\mathcal M$ on $\mathfrak n_{\rm Tr}$ then induces a normal faithful representation of $\mathcal M$ in $\mathcal B(L^2(\mathcal M, {\rm Tr}))$, which is called the \textit{standard representation}.

\subsection{Multipliers on discrete groups and associated multiplier algebras}
\label{sec:multipliers}
Let $\Gamma$ be an infinite countable discrete group. We will denote by $c_0(\Gamma)$, the space of all complex-valued functions on $\Gamma$ vanishing at infinity, i.e., $f\in c_0(\Gamma)$ if for every $\varepsilon>0$, there exists a finite set $F\subset\Gamma$ such that $|f(s)|<\varepsilon$ for all $s\in\Gamma\setminus F$. The space of all bounded complex-valued functions on $\Gamma$ will be denoted by $\ell^\infty\Gamma$, and $c_{00}(\Gamma)$ will denote the space of all finitely supported functions on $\Gamma$. For a subset $E\subset\Gamma$, we denote the characteristic function of $E$ by $\mathbf{1}_E$.

 The \textit{Fourier-Stieltjes algebra} of $\Gamma$, denoted by $B(\Gamma)$, is the set of all coefficient functions of unitary representations of $\Gamma$, that is, for every $\varphi\in B(\Gamma)$ there exists a unitary representation $(\pi,\cH)$ of $\Gamma$ and vectors $\xi,\eta\in\cH$ such that $\varphi(s)=\langle\pi(s)\xi,\eta\rangle$ for every $s\in\Gamma$. It is a Banach algebra with respect to the norm
$$\|\varphi\|_B=\inf\|\xi\|\|\eta\|,$$
where the infimum is taken over all representations of $\varphi$ as above.

The $\textit{Fourier algebra}$ of $\Gamma$, denoted by $A[\Gamma]$, is the set of all coefficient functions associated to the left regular representation of $\Gamma$. It is the norm closure of the algebra of finitely supported functions in the algebra $B(\Gamma)$.

A \textit{Herz-Schur multiplier} on $\Gamma$ is a function $\varphi:\Gamma\to\mathbb{C}$ for which there exists a Hilbert space $\cH$ and bounded functions $\xi,\eta:\Gamma\to\cH$ such that
$$\varphi(t^{-1}s)=\langle\xi(s),\eta(t)\rangle \qquad s,t\in\Gamma.$$
The set $B_2(\Gamma)$ of all Herz-Schur multipliers on $\Gamma$ is a Banach algebra with respect to the pointwise product and to the norm
$$\|\varphi\|_{B_2}=\inf\|\xi\|_\infty\|\eta\|_\infty,$$
where the infimum is taken over all representations of $\varphi$ as above. It turns out that $B_2(\Gamma)$ is a dual space, and the predual $Q(\Gamma)$ of $B_2(\Gamma)$ is obtained by completing $\ell^1\Gamma$ in the norm
\begin{align*}
\|\varphi\|_Q=\left\lbrace\left|\sum_{s\in\Gamma}\varphi(s)u(s)\right|\mid u\in B_2(\Gamma),\|u\|_{B_2}\leq 1\right\rbrace
\end{align*}
(see, \cite{He74,DH85}).

\begin{defn} 
Let $\Gamma$ be a countable discrete group.
\begin{enumerate}
\item (\cite{CCJJV}) We say that $\Gamma$ has the \textit{Haagerup property} if there exists a net $\{\varphi_i\}$ of normalized (i.e., $\varphi_i(e)=1$ for every $i$), positive definite functions on $\Gamma$ such that $\varphi_i\in c_0(\Gamma)$ for every $i$, and $\varphi_i\to 1$ pointwise.
\item (\cite{CH89}) We say that $\Gamma$ is \textit{weakly amenable} if there exists a net $\{\varphi_i\}$ of finitely supported functions on $\Gamma$ converging pointwise to the constant function $1$, and such that $\sup_i\|\varphi_i\|_{B_2}\leq C$. The \textit{Cowling-Haagerup constant} $\Lambda_{\mathrm{cb}}(\Gamma)$ is the infimum of all constants $C$ for which such a net $\{\varphi_i\}$ exists.

\item (\cite{Kn16}) We say that $\Gamma$ has the \textit{weak Haagerup property} if there  there exists a net $\{\varphi_i\}$ in $B_2(\Gamma)\cap c_0(\Gamma)$ such that $\sup_i\|\varphi_i\|_{B_2}\leq C$ and $\varphi_i\to 1$ pointwise. The \textit{weak Haagerup constant} $\Lambda_{\mathrm{wcb}}(\Gamma)$ is the infimum of all constants $C$ for which such a net $\{\varphi_i\}$ exists.
\item (\cite{HaKr94}) We say that $\Gamma$ has the \textit{approximation property} (AP) if there exists a net $\{\varphi_i\}$ of finitely supported functions on $\Gamma$ such that $\varphi_i\to 1$ in the $\sigma(B_2(\Gamma),Q(\Gamma))$-topology.
\end{enumerate}
\end{defn}

\begin{rem}
It is easy to see that every finitely supported function on $\Gamma$ can be realized as a coefficient of the left regular representation and hence $c_{00}(\Gamma)\subset A[\Gamma]$. Therefore, by \cite[Proposition 1.1]{CH89}, $\Gamma$ is weakly amenable if and only if there exists a net $\{\varphi_i\}$ in the Fourier algebra $A[\Gamma]$ such that $\varphi_i\to 1$ pointwise and $\sup_i\|\varphi_i\|_{B_2}<\infty$.
\end{rem}

\begin{rem}
The inclusion map from $B(\Gamma)$ into $B_2(\Gamma)$ is a contraction (see \cite[Corollary 1.8]{DH85}), and so the $\sigma(B_2(\Gamma),Q(\Gamma))$-closure of any subset $E$ of $B(\Gamma)$ contains the closure of $E$ in the $B(\Gamma)$-norm. Hence $\Gamma$ has (AP) if and only if the constant function $1$ is in the $\sigma(B_2(\Gamma),Q(\Gamma))$-closure of $A[\Gamma]$ in $B_2(\Gamma)$.
\end{rem}

Since $A[\Gamma]\subset B_2(\Gamma)\cap c_0(\Gamma)$, one always has $\Lambda_{\mathrm{wcb}}(\Gamma)\leq\Lambda_{\mathrm{cb}}(\Gamma)$, and a weakly amenable group has the weak Haagerup property. Similarly, as normalized, positive definite functions are Herz-Schur multipliers of norm one, it follows that if $\Gamma$ has the Haagerup property then it has the weak Haagerup property and $\Lambda_{\mathrm{wcb}}(\Gamma)=1$. It is well known that all weakly amenable groups have (AP), and there are non-weakly amenable groups with the (AP) as well (see \cite{HaKr94}).

\subsection{Actions on semi-finite von Neumann algebras}\label{sec:actions}

If $\Lambda$ is a discrete group and $\Lambda \aactson{\sigma} \mathcal M$ is an action that preserves the trace ${\rm Tr}$, then restricted to $\mathfrak n_{\rm Tr}$, the action is isometric with respect to $\| \cdot \|_2$ and hence gives a unitary representation in $\mathcal U(L^2(\mathcal M, {\rm Tr}))$, which is called the \textit{Koopman representation} and denoted by $\sigma^0: \Lambda \to  \mathcal U(L^2(\mathcal M, {\rm Tr}))$. Note that, considering $\mathcal M \subset \mathcal B(L^2(\mathcal M, {\rm Tr}))$ via the standard representation, we have that the action $\sigma: \Lambda \to {\rm Aut}(\mathcal M, {\rm Tr})$ becomes unitarily implemented via the Koopman representation, i.e., for $x \in \mathcal M$ and $s \in \Lambda$ we have $\sigma_s(x) = \sigma_s^0 x \sigma_{s^{-1}}^0$. 

\begin{defn}[\cite{IPR}]
Let $\Lambda\actson^\sigma\cM$ be an action of a discrete group $\Lambda$ on a von Neumann algebra $\cM$. A \textit{fundamental domain} for the action is a projection $p\in\cM$ so that $\{\sigma_s(p)\}_{s\in\Lambda}$ gives a partition of unity.
\end{defn}

Note that, if $p\in\cM$ is a fundamental domain, then we obtain a normal inclusion $\theta_p:\ell^\infty\Lambda\to \cM$ by $\theta_p(f)=\sum_{s\in\Lambda}f(s)\sigma_{s}(p)$. Moreover, this embedding is equivariant with respect to the $\Lambda$-actions, where $\Lambda\actson\ell^\infty\Lambda$ is the canonical left action given by $L_\lambda(f)(s)=f(\lambda^{-1}s)$. Indeed,
\begin{align*}
\sigma_\lambda(\theta_p(f))=\sum_{s\in\Lambda}f(s)\sigma_{\lambda s}(p)=\sum_{t\in\Lambda}f(\lambda^{-1}t)\sigma_t(p)=\sum_{t\in\Lambda}(L_\lambda (f))(t)\sigma_t(p)=\theta_p(L_\lambda(f))
\end{align*}

Conversely, if $\theta:\ell^\infty\Lambda\to\cM$ is an equivariant embedding, then $\theta(\delta_e)$ gives a fundamental domain. 

\begin{defn}[\cite{IPR}]
Let $\Gamma$ and $\Lambda$ be countable discrete groups. A \textit{von Neumann coupling} between $\Gamma$ and $\Lambda$ consists of a semi-finite von Neumann algebra $\cM$ with a faithful normal semi-finite trace $\Tr$ and a trace-preserving action $\Gamma\times\Lambda\actson\cM$ such that there exist finite-trace fundamental domains $q$ and $p$ for the $\Gamma$- and $\Lambda$-actions, respectively. The index of the von Neumann coupling is the ratio $\Tr(q)/\Tr(p)$ and is denoted by $[\Gamma:\Lambda]_{\cM}$. We say that $\Gamma$ and $\Lambda$ are \textit{von Neumann equivalent} and write $\Gamma\sim_{vNE}\Lambda$ if there exists a von Neumann coupling between them.
\end{defn}

An element $x\in\cM$ is \textit{compact}, if for every $\varepsilon>0$, there exists a projection $p\in\cM$ such that $\|xp\|<\varepsilon$ and $\mathbf{1}-p$ is finite. If $\cM$ is a semi-finite von Neumann algebra with a faithful normal semi-finite trace $\Tr$, and $p\in\cM$ is a finite-trace projection, then the map $x\mapsto\Tr(xp)$ is weak operator topology continuous.

\begin{lem}
\label{lem:compact operators} 
 Suppose $\cM$ is a semi-finite von Neumann algebra with a faithful normal semi-finite trace $\Tr$, $x\in\cM$ is compact, and $\{p_i\}$ is a net of finite-trace projections such that $p_i\to 0$ in the weak operator topology. If $\{\Tr(p_i)\}$ is uniformly bounded, then, $\Tr(xp_i)\to 0$.
\end{lem}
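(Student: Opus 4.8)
The plan is to use the definition of a compact element to split $x$ as a sum of an operator of small norm and one cut down by a finite projection, and then to handle the two pieces by unrelated means. First I would observe that $\Tr(xp_i)$ is well defined: since $\Tr(p_i)<\infty$ we have $p_i\in\mathfrak n_{\Tr}$, and as $\mathfrak n_{\Tr}$ is an ideal in $\cM$, also $xp_i\in\mathfrak n_{\Tr}\subseteq\mathfrak m_{\Tr}$. Set $M:=\sup_i\Tr(p_i)<\infty$ and fix $\eps>0$.

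Since $x$ is compact, choose a projection $e\in\cM$ with $\|xe\|<\eps$ and $f:=\mathbf 1-e$ finite, so $\Tr(f)<\infty$. Writing $x=xe+xf$ gives $\Tr(xp_i)=\Tr(xe\,p_i)+\Tr(xf\,p_i)$. For the first summand I would use the $\cM$--$L^1(\cM,\Tr)$ duality bound $|\Tr(ab)|\le\|a\|\,\|b\|_1$ together with $\|p_i\|_1=\Tr(p_i)\le M$ to get $|\Tr(xe\,p_i)|\le\|xe\|\,\Tr(p_i)\le\eps M$ for every $i$.

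For the second summand the point is that $f$ has finite trace. Using the trace identity (both $xf$ and $p_i$ lie in $\mathfrak n_{\Tr}$) one rewrites $\Tr(xf\,p_i)=\Tr\big((p_ix)f\big)$. Now $p_ix\to0$ in the weak operator topology, since $p_i\to0$ there and right multiplication by $x$ is weak-operator continuous; hence, because the map $y\mapsto\Tr(yf)$ is weak-operator continuous for the finite-trace projection $f$ (the fact recorded just before the lemma), $\Tr\big((p_ix)f\big)\to0$. Combining the two estimates yields $\limsup_i|\Tr(xp_i)|\le\eps M$, and letting $\eps\to0$ finishes the proof.

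I do not expect a serious obstacle here: once the decomposition coming from compactness is in hand the argument is routine, and the only thing needing attention is the bookkeeping that keeps all the products inside the definition ideal $\mathfrak m_{\Tr}$, so that the trace pairing, its cyclicity, and the $L^1$--$L^\infty$ estimate are all legitimate.
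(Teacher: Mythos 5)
Your argument is correct and is essentially the paper's own proof: the same decomposition via a projection $e$ with $\|xe\|<\eps$ and $\mathbf 1-e$ of finite trace, the $L^1$--$L^\infty$ bound $|\Tr(xe\,p_i)|\le\|xe\|\Tr(p_i)$, and weak-operator continuity of the trace against the finite-trace projection to kill the remaining term. The only (shared) fine point is passing from ``$\mathbf 1-e$ finite'' in the definition of compactness to ``$\Tr(\mathbf 1-e)<\infty$,'' which the paper's proof assumes in exactly the same way.
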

\begin{proof}
Given $\varepsilon>0$, there exists a projection $q\in\cM$ such that $\|xq\|<\varepsilon$ and $\Tr(\mathbf{1}-q)<\infty$. Since $p_i\to 0$ in the weak operator topology, we get that $xp_i\to0$ in the weak operator topology and hence $\Tr(p_ix(\mathbf{1}-q))\to 0$. Moreover, we have that $\{\Tr(p_i)\}$ is uniformly bounded, say by $C$, whence it follows that
\begin{align*}
\limsup_i|\Tr(xp_i)|\leq\limsup_i(\|xq\||\Tr(p_i)|+|\Tr(p_ix(\mathbf{1}-q))|)\leq \varepsilon C.
\end{align*}
Since $\varepsilon>0$ was arbitrary, the proof is complete.
\end{proof}

\section{Inducing Herz-Schur Multipliers}

We present the proof of Theorem \ref{main theorem} in this section. The proof relies on the following analogue of Lemma 2.1 in \cite{Jo14}. 

\begin{lem}
\label{main lemma}
 Let $\cM$ be a semi-finite von Neumann algebra with a faithful normal semi-finite-trace $\Tr$ and let $\Lambda\actson^\sigma(\cM,\Tr)$ be a trace-preserving action with a finite-trace fundamental domain $p$. Suppose $\Gamma\actson(\cM,\Tr)$ is another trace-preserving action that commutes with the $\Lambda$-action. For $\varphi\in\ell^\infty\Lambda$, define $\hat{\varphi}:\Gamma\to\mathbb{C}$ by
$$\hat{\varphi}(\gamma):=\frac{1}{\Tr(p)}\Tr(\sigma_\gamma(\theta_p(\varphi))p)=\frac{1}{\Tr(p)}\Tr(\theta_p(\varphi)\sigma_{\gamma^{-1}}(p)), \qquad \gamma\in\Gamma,$$
where $\theta_p:\ell^\infty\Lambda\hookrightarrow\cM$ is the $\Lambda$-equivariant embedding.
\begin{enumerate}[$($a$)$]
\item \label{item:A}If $\varphi\in B_2(\Lambda)$ is a Herz-Schur multiplier on $\Lambda$, then $\hat{\varphi}$ is a Herz-Schur multiplier on $\Gamma$ and $\|\hat{\varphi}\|_{B_2}\leq\|\varphi\|_{B_2}$. Moreover, if $\varphi$ is positive definite, then so is $\hat{\varphi}$.

\item \label{item:B} If $\Gamma\actson(\cM,\Tr)$ is mixing, i.e., the Koopman representation $\Gamma\actson L^2(\cM,\Tr)$ is mixing, and if $\varphi\in c_0(\Lambda)$, then $\hat{\varphi}\in c_0(\Gamma)$. In particular, if $\varphi\in B_2(\Lambda)\cap c_0(\Lambda)$, then $\hat{\varphi}\in B_2(\Gamma)\cap c_0(\Gamma)$.
\end{enumerate}
\end{lem}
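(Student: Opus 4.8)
The plan is to verify the two claims separately, in each case by writing $\hat\varphi$ explicitly as a coefficient of a suitable representation of $\Gamma$ built from the data $(\cM,\Tr,\sigma,p)$ and a Herz--Schur (resp. positive definite) representation of $\varphi$ on $\Lambda$.

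For part \eqref{item:A}: suppose $\varphi(t^{-1}s)=\langle\xi(s),\eta(t)\rangle$ with $\xi,\eta:\Lambda\to\cH$ bounded, $\|\xi\|_\infty\|\eta\|_\infty$ close to $\|\varphi\|_{B_2}$. First I would observe that the Koopman representation $\Gamma\actson^{\sigma^0} L^2(\cM,\Tr)$ and the equivariant embedding $\theta_p:\ell^\infty\Lambda\to\cM$ interact well with the finite-trace projection $p$: writing $p_s:=\sigma_s(p)$ (a partition of unity indexed by $\Lambda$), one has $\theta_p(\varphi)=\sum_s\varphi(s)p_s$, so that
\[
\hat\varphi(\gamma)=\frac{1}{\Tr(p)}\sum_{s\in\Lambda}\varphi(s)\,\Tr\bigl(\sigma_\gamma(p_s)\,p\bigr).
\]
The key identity is that, because the $\Gamma$- and $\Lambda$-actions commute, $\sigma_\gamma(p_s)=\sigma_\gamma\sigma_s(p)$ and these are mutually orthogonal as $s$ varies; and $\Tr(\sigma_\gamma(p_s)p)=\langle \sigma^0_\gamma(\hat{p_s}),\hat p\rangle_{\Tr}$ where $\hat p\in L^2(\cM,\Tr)$ denotes the image of $p$. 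Substituting $\varphi(s)=\langle\xi(s^{}),\eta(e)\rangle$ won't directly work; instead the right move — exactly as in Jolissaint — is to define vector-valued maps $\Gamma\to \cH\otimes L^2(\cM,\Tr)$ by
\[
\Xi(\gamma):=\frac{1}{\sqrt{\Tr(p)}}\sum_{s\in\Lambda}\xi(s)\otimes \sigma^0_\gamma(\widehat{p_s}),\qquad
\Upsilon(\gamma):=\frac{1}{\sqrt{\Tr(p)}}\sum_{t\in\Lambda}\eta(t)\otimes \sigma^0_\gamma(\widehat{p_t}),
\]
and then check that $\langle\Xi(\gamma_1),\Upsilon(\gamma_2)\rangle$ telescopes, using $\langle \widehat{p_s},\widehat{p_t}\rangle_{\Tr}=\delta_{s,t}\Tr(p)$ together with $\Lambda$-equivariance of $\theta_p$, to give $\hat\varphi(\gamma_2^{-1}\gamma_1)$. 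Boundedness of $\Xi,\Upsilon$ by $\|\xi\|_\infty$ and $\|\eta\|_\infty$ respectively follows from orthogonality of the $\widehat{p_s}$ and the fact that $\sum_s\widehat{p_s}$ ``fills'' the space with total mass $\Tr(\mathbf 1)$ — one has to be slightly careful since $\Tr$ is only semi-finite, but the relevant sums are controlled after normalizing by $\Tr(p)$, and in the end $\|\Xi\|_\infty\le\|\xi\|_\infty$, $\|\Upsilon\|_\infty\le\|\eta\|_\infty$. This yields $\|\hat\varphi\|_{B_2}\le\|\varphi\|_{B_2}$. For the positive-definite case, one takes $\eta=\xi$ and $\pi$ a unitary representation with $\varphi(s)=\langle\pi(s)\xi,\xi\rangle$; then $\Xi=\Upsilon$ and the same computation exhibits $\hat\varphi$ as a diagonal coefficient, hence positive definite (and normalization $\hat\varphi(e)=1$ when $\varphi(e)=1$ is immediate from $\theta_p(\mathbf 1)=\mathbf 1$ and $\Tr(p\cdot\mathbf 1)=\Tr(p)$).

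For part \eqref{item:B}: assume the Koopman representation of $\Gamma$ is mixing and $\varphi\in c_0(\Lambda)$. Here I would use Lemma \ref{lem:compact operators}. Fix $\eps>0$ and choose a finite set $F\subset\Lambda$ with $|\varphi(s)|<\eps$ off $F$; split $\theta_p(\varphi)=\theta_p(\varphi\mathbf 1_F)+\theta_p(\varphi\mathbf 1_{\Lambda\setminus F})$. The second term has norm $<\eps$ (sup of $|\varphi|$ off $F$, since the $p_s$ are orthogonal projections), contributing at most $\eps$ to $|\hat\varphi(\gamma)|$ after dividing by $\Tr(p)$ and using $|\Tr(x\sigma_{\gamma^{-1}}(p))|\le\|x\|\Tr(p)$. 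For the finite part, $\theta_p(\varphi\mathbf 1_F)=\sum_{s\in F}\varphi(s)p_s$ is a finite-trace element of $\cM$, in particular compact; and $\sigma_{\gamma^{-1}}(p)\to 0$ in the weak operator topology as $\gamma\to\infty$ precisely because the Koopman representation is mixing (the matrix coefficients $\gamma\mapsto\langle\sigma^0_\gamma\hat p,\hat p\rangle_{\Tr}$ and more generally against any $L^2$-vectors tend to $0$), while $\Tr(\sigma_{\gamma^{-1}}(p))=\Tr(p)$ is constant, hence uniformly bounded. So Lemma \ref{lem:compact operators} gives $\Tr(\theta_p(\varphi\mathbf 1_F)\sigma_{\gamma^{-1}}(p))\to 0$. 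Combining, $\limsup_{\gamma\to\infty}|\hat\varphi(\gamma)|\le\eps$, and letting $\eps\to0$ shows $\hat\varphi\in c_0(\Gamma)$. The ``in particular'' statement is then immediate by combining with part \eqref{item:A}.

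The main obstacle I anticipate is the bookkeeping in part \eqref{item:A}: making the vectors $\Xi,\Upsilon$ genuinely well-defined (the sums over $\Lambda$ converge in $\cH\otimes L^2(\cM,\Tr)$ because of orthogonality of $\sigma^0_\gamma(\widehat{p_s})$ and boundedness of $\xi$, but one must confirm the norm bound is exactly $\|\xi\|_\infty$ — this uses $\|\sum_s c_s\widehat{p_s}\|_2^2=\Tr(p)\sum_s|c_s|^2$ only when the $c_s$ are scalars; with $\cH$-valued coefficients one instead estimates $\|\Xi(\gamma)\|^2=\frac1{\Tr(p)}\sum_s\|\xi(s)\|^2\Tr(p)=\sum_s\|\xi(s)\|^2$, which diverges!). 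The correct fix, following Jolissaint, is to not sum a fixed $\xi(s)$ against $\widehat{p_s}$ but rather to use the ``diagonal'' vectors living in $\cH\otimes L^2(\cM,\Tr)$ of the form $\gamma\mapsto (\xi\circ c)\otimes$(something of unit $L^2$-mass), where $c:\Omega\to\Lambda$ is the $\Lambda$-cocycle/return map encoded by the fundamental domain; concretely one works in $\cH\ovt \cM p$ or uses the $\cM$-valued inner product on the Hilbert module $p\cM$, so that the relevant vector is $\gamma\mapsto \bigl(s\mapsto \xi(s)\bigr)$ viewed inside $\ell^2(\Lambda,\cH)\otimes(\text{rank-one piece})$ twisted by $\sigma_\gamma$. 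I would set this up carefully using the standard representation and the module structure, which is where the real work lies; everything else is routine.
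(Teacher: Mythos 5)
Your part \eqref{item:B} is essentially the paper's argument (compactness of $\theta_p(\varphi)$ for $\varphi\in c_0(\Lambda)$, mixing giving $\sigma_\gamma(p)\to 0$ weakly with $\Tr(\sigma_\gamma(p))=\Tr(p)$ bounded, then Lemma \ref{lem:compact operators}), and is fine. The genuine gap is in part \eqref{item:A}. The vectors $\Xi(\gamma)=\frac{1}{\sqrt{\Tr(p)}}\sum_s\xi(s)\otimes\sigma^0_\gamma(\widehat{p_s})$ are, as you yourself note at the end, not defined: each $\sigma^0_\gamma(\widehat{p_s})$ has $\|\cdot\|_2^2=\Tr(p)$, so $\|\Xi(\gamma)\|^2=\sum_s\|\xi(s)\|^2$ diverges. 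But the fix you then gesture at (a cocycle/return map $c:\Omega\to\Lambda$, or ``the module structure of $p\cM$'') is exactly the part you defer as ``the real work,'' so the proposal does not actually contain a proof of (a). Moreover, the mechanism you describe for the coefficient identity -- telescoping via the orthogonality $\langle\widehat{p_s},\widehat{p_t}\rangle_{\Tr}=\delta_{s,t}\Tr(p)$ -- is not how the computation can go: after twisting by two different group elements $\gamma_1,\gamma_2$ the overlaps $\Tr\bigl(\sigma_{\gamma_2}(\sigma_t(p))\,\sigma_{\gamma_1}(\sigma_s(p))\bigr)$ are not diagonal in $(s,t)$, and it is precisely these cross terms that assemble into $\hat\varphi(\gamma_2^{-1}\gamma_1)$; with your uncut vectors the formal double sum also diverges (each $t$-term contributes the same trace).

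The paper's resolution is simpler and more concrete than what you sketch: cut the vectors down by right multiplication by $p$, i.e.\ set
\begin{align*}
\hat{\xi}(\gamma)=\frac{1}{\sqrt{\Tr(p)}}\sum_{s\in\Lambda}\sigma_\gamma(\sigma_s(p))\,p\otimes\xi(s)\in L^2(\cM,\Tr)\ovt\cH_0,
\end{align*}
and similarly for $\hat\eta$. Then $\sum_s\|\sigma_\gamma(\sigma_s(p))p\|_2^2=\sum_s\Tr(p\,\sigma_\gamma(\sigma_s(p)))=\Tr(p)$ because $\sum_s\sigma_\gamma(\sigma_s(p))=\mathbf{1}$, so the sums converge and $\|\hat\xi\|_\infty\le\|\xi\|_\infty$, $\|\hat\eta\|_\infty\le\|\eta\|_\infty$. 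The identity $\langle\hat\xi(\gamma_1),\hat\eta(\gamma_2)\rangle=\hat\varphi(\gamma_2^{-1}\gamma_1)$ is then obtained not by orthogonality but by a chain of trace identities: sum over $s$ first, using $\sum_s\varphi(t^{-1}s)\sigma_s(p)=\sigma_t(\theta_p(\varphi))$ ($\Lambda$-equivariance of $\theta_p$), then use the trace property, the commutation of the two actions and trace-invariance of $\sigma_t$, and finally collapse the $t$-sum via the partition of unity $\sum_t\sigma_{t^{-1}}(\sigma_{\gamma_2^{-1}}(p))=\mathbf{1}$ -- the trailing factor $p$ in the vectors is exactly what makes this last step produce the single finite trace $\Tr(\sigma_{\gamma_2^{-1}\gamma_1}(\theta_p(\varphi))p)$. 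The positive-definite statement then follows by taking $\eta=\xi$ as you say. So your overall strategy (exhibit $\hat\varphi$ as a Herz--Schur coefficient over $L^2(\cM,\Tr)\ovt\cH_0$) is the right one, but the decisive step -- the cut-down by $p$ and the resulting trace computation -- is missing.
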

\begin{proof}
Since $p$ is a finite-trace fundamental domain, it follows that $\hat{\varphi}$ is well-defined and $\|\hat{\varphi}\|_\infty\leq\|\varphi\|_\infty$. Let $\xi,\eta:\Lambda\to\cH_0$ be bounded functions from $\Lambda$ into a Hilbert space $\cH_0$ such that $\varphi(t^{-1}s)=\langle\xi(s),\eta(t)\rangle,~~ s,t\in\Lambda.$ 

Let $\cH=L^2(\cM,\Tr)\overline{\otimes}\cH_0$. Note that, for $\gamma\in\Gamma$, we have
\begin{align*}
\sum_{s\in\Lambda}\|\sigma_\gamma(\sigma_s(p))p\|_2^2\|\xi(s)\|^2 \leq\|\xi\|_\infty^2\sum_{s\in\Lambda}\Tr(p\sigma_\gamma(\sigma_s(p)))=\|\xi\|_\infty^2\Tr(p)<\infty.
\end{align*}

Therefore, $\hat{\xi},\hat{\eta}:\Gamma\to L^2(\cM,\Tr)\overline{\otimes}\cH_0$ given below are well-defined.
\begin{align*}
\hat{\xi}(\gamma):=\frac{1}{\sqrt{\Tr(p)}}\sum_{s\in\Lambda}\sigma_\gamma(\sigma_s(p))p\otimes\xi(s), \qquad
\hat{\eta}(\gamma):=\frac{1}{\sqrt{\Tr(p)}}\sum_{t\in\Lambda}\sigma_\gamma(\sigma_t(p))p\otimes\eta(t).
\end{align*}
One has, for every $\gamma\in\Gamma$,
\begin{align*}
\|\hat{\xi}(\gamma)\|^2 =\frac{1}{\Tr(p)}\sum_{s\in\Lambda}\langle\xi(s),\xi(s)\rangle\Tr(\sigma_s(p)\sigma_{\gamma^{-1}}(p))\leq\frac{1}{\Tr(p)}\|\xi\|_\infty^2\Tr\left(\sum_{s\in\Lambda}\sigma_s(p)\sigma_{\gamma^{-1}}(p)\right)=\|\xi\|_\infty^2.
\end{align*}
Thus, $\|\hat{\xi}\|_\infty\leq\|\xi\|_\infty$. Similarly, $\|\hat{\eta}\|_\infty\leq\|\eta\|_\infty$.
Finally, for $\gamma_1, \gamma_2\in\Gamma$, we have
\begin{align*}
\langle\hat{\xi}(\gamma_1),\hat{\eta}(\gamma_2)\rangle&=\frac{1}{\Tr(p)}\left\langle\sum_{s\in\Lambda}\sigma_{\gamma_1}(\sigma_s(p))p\otimes\xi(s),\sum_{t\in\Lambda}\sigma_{\gamma_2}(\sigma_t(p))p\otimes\eta(t)\right\rangle\\
&=\frac{1}{\Tr(p)}\sum_{s,t\in\Lambda}\langle\xi(s),\eta(t)\rangle\Tr(p\sigma_{\gamma_2}(\sigma_t(p))\sigma_{\gamma_1}(\sigma_s(p)))\\
&=\frac{1}{\Tr(p)}\sum_{t\in\Lambda}\Tr\left(p\sigma_{\gamma_2}(\sigma_t(p))\sigma_{\gamma_1}\left(\sum_{s\in\Lambda}\varphi(t^{-1}s)\sigma_s(p)\right)\right)\\
&=\frac{1}{\Tr(p)}\sum_{t\in\Lambda}\Tr(p\sigma_{\gamma_2}(\sigma_t(p))\sigma_{\gamma_1}(\sigma_t(\theta_p(\varphi))))\\
&=\frac{1}{\Tr(p)}\sum_{t\in\Lambda}\Tr(\sigma_{\gamma_2^{-1}}(p)\sigma_t(p\sigma_{\gamma_2^{-1}\gamma_1}(\theta_p(\varphi))))\\
&=\frac{1}{\Tr(p)}\Tr\left(\sum_{t\in\Lambda}\sigma_{t^{-1}}(\sigma_{\gamma_2^{-1}}(p))p\sigma_{\gamma_2^{-1}\gamma_1}(\theta_p(\varphi))\right)\\
&=\frac{1}{\Tr(p)}\Tr(\sigma_{\gamma_2^{-1}\gamma_1}(\theta_p(\varphi))p)\\
&=\hat{\varphi}(\gamma_2^{-1}\gamma_1).
\end{align*}
Therefore, $\hat{\varphi}$ is a Herz-Schur multiplier with $\|\hat{\varphi}\|_{B_2}\leq\|\varphi\|_{B_2}$. Furthermore, if $\varphi$ is positive definite, then one can take $\eta=\xi$ and it is straightforward to see that $\hat{\varphi}$ is positive definite on $\Gamma$ as well. (In fact, let $(\pi_\varphi,\cH_\varphi,\xi_\varphi)$ be the GNS-triple associated to $\varphi$. Then, as $\varphi(s)=\langle\pi_\varphi(s)\xi_\varphi,\xi_\varphi\rangle$ for every $s\in\Lambda$, we see that the function $s\mapsto\xi(s)=\pi_\varphi(s)\xi_\varphi$ works.)

 Note that, if $\varphi\in c_0(\Lambda)$, then $\theta_p(\varphi)$ is compact. Since the action of $\Gamma$ is mixing, $\sigma_\gamma(p)\to 0$ in the weak operator topology as $\gamma\to\infty$, and (b) now follows from Lemma \ref{lem:compact operators}.
\end{proof}

\begin{prop}
\label{main propositon}
 Let $\cM$ be a semi-finite von Neumann algebra with a faithful normal semi-finite trace $\Tr$ and $\Lambda\actson^\sigma(\cM,\Tr)$ be a trace-preserving action with a finite-trace fundamental domain $p$. Suppose $\Gamma\actson(\cM,\Tr)$ is another trace-preserving action that commutes with the $\Lambda$-action. Consider the map $\Phi:\ell^\infty\Lambda\to\ell^\infty\Gamma$ defined by $\Phi(\varphi)=\hat{\varphi}$, where $\hat{\varphi}$ is defined as in Lemma \ref{main lemma}. Then $\Phi$ is a contractive linear mapping from $B_2(\Lambda)$ into $B_2(\Gamma)$, and is
 \begin{enumerate}[$($a$)$]
 \item  continuous on norm bounded sets with respect to the topology of poitwise convergence.
 \item  $\sigma(B_2(\Lambda),Q(\Lambda))$-$\sigma(B_2(\Gamma),Q(\Gamma))$ continuous.
 \end{enumerate}
\end{prop}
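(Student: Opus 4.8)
Linearity of $\Phi$ is immediate from linearity of $\theta_p$ and of the trace, and the assertion that $\Phi$ restricts to a contraction $B_2(\Lambda)\to B_2(\Gamma)$ is precisely Lemma~\ref{main lemma}(a); so all the work is in (a) and (b). For use in both parts I would first rewrite $\hat\varphi$ as an absolutely convergent series: since $\{\sigma_s(p)\}_{s\in\Lambda}$ partitions the unit and $\Tr$ is normal and $\Gamma\times\Lambda$-invariant, one has $\sum_{s\in\Lambda}\Tr(\sigma_s(p)\sigma_{\gamma^{-1}}(p))=\Tr(\sigma_{\gamma^{-1}}(p))=\Tr(p)<\infty$, and using weak-operator continuity of $x\mapsto\Tr(x\sigma_{\gamma^{-1}}(p))$ (valid since $\sigma_{\gamma^{-1}}(p)$ has finite trace) this identifies
$$\hat\varphi(\gamma)=\frac{1}{\Tr(p)}\sum_{s\in\Lambda}\varphi(s)\,\Tr(\sigma_s(p)\sigma_{\gamma^{-1}}(p)),\qquad \varphi\in\ell^\infty\Lambda.$$

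For (a): given a net $\{\varphi_i\}$ in $B_2(\Lambda)$ with $C:=\sup_i\|\varphi_i\|_{B_2}<\infty$ and $\varphi_i\to\varphi$ pointwise, I would fix $\gamma\in\Gamma$ and apply dominated convergence on $\Lambda$ (with counting measure) to the series above: the terms are dominated by the summable function $s\mapsto C\,\Tr(\sigma_s(p)\sigma_{\gamma^{-1}}(p))$ since $\|\varphi_i\|_\infty\le\|\varphi_i\|_{B_2}\le C$, and $\varphi_i(s)\to\varphi(s)$ for every $s$, so $\hat\varphi_i(\gamma)\to\hat\varphi(\gamma)$. That is all of (a).

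For (b) the plan is to build a bounded pre-adjoint $\Phi_*\colon Q(\Gamma)\to Q(\Lambda)$ with $(\Phi_*)^*=\Phi$; then $\Phi$, as the adjoint of a bounded operator, is automatically $\sigma(B_2(\Lambda),Q(\Lambda))$-$\sigma(B_2(\Gamma),Q(\Gamma))$ continuous. On the dense subspace $\ell^1\Gamma\subset Q(\Gamma)$ I would define $\Phi_*$ by transposing the kernel above,
$$\Phi_*(u)(s)=\frac{1}{\Tr(p)}\sum_{\gamma\in\Gamma}\Tr(\sigma_s(p)\sigma_{\gamma^{-1}}(p))\,u(\gamma),\qquad s\in\Lambda.$$
Summing $|\Phi_*(u)(s)|$ over $s$ and again using $\sum_s\Tr(\sigma_s(p)\sigma_{\gamma^{-1}}(p))=\Tr(p)$ gives $\|\Phi_*(u)\|_1\le\|u\|_1$, so $\Phi_*(u)\in\ell^1\Lambda\subset Q(\Lambda)$, while a Fubini interchange (licensed by the same estimate) yields the duality identity $\langle\Phi(\varphi),u\rangle=\langle\varphi,\Phi_*(u)\rangle$ for all $\varphi\in B_2(\Lambda)$, $u\in\ell^1\Gamma$.

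The one step that is not purely formal -- and the one I expect to be the crux -- is upgrading the $\ell^1$-bound on $\Phi_*$ to a $Q$-bound. This is forced by the $B_2$-contractivity of $\Phi$ together with the definition of the $Q$-norm:
$$\|\Phi_*(u)\|_{Q(\Lambda)}=\sup_{\|\varphi\|_{B_2}\le1}\big|\langle\varphi,\Phi_*(u)\rangle\big|=\sup_{\|\varphi\|_{B_2}\le1}\big|\langle\Phi(\varphi),u\rangle\big|\le\|u\|_{Q(\Gamma)}.$$
Hence $\Phi_*$ extends to a contraction $Q(\Gamma)\to Q(\Lambda)$; its adjoint then maps $B_2(\Lambda)=Q(\Lambda)^*$ into $B_2(\Gamma)=Q(\Gamma)^*$, is weak-$*$ continuous, and agrees with $\Phi$ on $B_2(\Lambda)$ because the two resulting functionals in $Q(\Gamma)^*$ coincide on the dense set $\ell^1\Gamma$. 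This completes (b). Apart from this duality observation, the argument is routine bookkeeping with normality of $\Tr$ and with Fubini/dominated convergence on $\Gamma$ and $\Lambda$.
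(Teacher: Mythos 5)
Your proposal is correct, and it proves the proposition by a more explicit route than the paper, though the overall architecture (contractivity from Lemma \ref{main lemma}, then a pointwise argument for (a) and an $\ell^1$-preserving pre-adjoint plus density of $\ell^1$ in $Q$ for (b)) is the same. For (a), the paper passes to a subnet converging weak$^*$ in $\ell^\infty\Lambda$ and invokes normality of the embedding $\theta_p$ (via a lemma quoted from elsewhere) together with weak-operator continuity of $x\mapsto\Tr(x\sigma_{\gamma^{-1}}(p))$; you instead expand $\hat\varphi(\gamma)$ against the kernel $k_\gamma(s)=\Tr(\sigma_s(p)\sigma_{\gamma^{-1}}(p))$, which is summable with $\sum_s k_\gamma(s)=\Tr(p)$, and conclude by dominated convergence. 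One small caution: dominated convergence is not a theorem for nets in general, so the honest justification is the finite-truncation argument, which does go through here exactly because the dominating function is a fixed summable function on a countable set and you keep the uniform bound $\|\varphi_i\|_\infty\leq C$ to control the tail; with that reading your (a) is complete, needs no subnet, and is more self-contained than the paper's. For (b), the paper argues abstractly that $\Phi:\ell^\infty\Lambda\to\ell^\infty\Gamma$ is normal, hence its dual maps $\ell^1\Gamma$ into $\ell^1\Lambda$, and then uses boundedness of $\Phi^*$ and density of $\ell^1$ in $Q$ to get $\Phi^*(Q(\Gamma))\subset Q(\Lambda)$; you instead exhibit the transposed-kernel operator $\Phi_*$ on $\ell^1\Gamma$, verify $\|\Phi_*(u)\|_1\leq\|u\|_1$ and the duality identity by Tonelli--Fubini, and then obtain the quantitative bound $\|\Phi_*(u)\|_{Q(\Lambda)}\leq\|u\|_{Q(\Gamma)}$ directly from the $B_2$-contractivity of $\Phi$ and the definition of the $Q$-norm. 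That quantitative step is precisely what makes the paper's continuity-plus-density argument work, so the two proofs deliver the same content; yours buys explicitness and independence from the normality lemma for $\theta_p$, while the paper's is shorter but relies on that lemma and on the general principle (which you also use at the very end, when identifying $\Phi=(\Phi_*)^*$) that a bounded map between dual spaces whose (pre-)adjoint preserves the preduals is weak$^*$-weak$^*$ continuous.
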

\begin{proof}
It is clear that $\Phi:B_2(\Lambda)\to B_2(\Gamma)$ is linear, and that it is contractive follows from Lemma \ref{main lemma}\eqref{item:A}. Moreover, $\Phi:\ell^\infty\Lambda\to\ell^\infty\Gamma$ is also a linear contraction.
\begin{enumerate}[$($a$)$]
\item Let $\varphi_i\to 0$ in $B_2(\Lambda)$ pointwise and let $\|\varphi_i\|_{B_2}<C$ for every $i$. Since $\|\cdot\|_\infty\leq\|\cdot\|_{B_2}$, after passing to a subnet if necessary, we may assume that $\varphi_i\to 0$ weak$^*$. By Lemma \ref{lem: properties of embedding} it follows that $\theta_p(\varphi_i)\to 0$ in the weak operator topology. Therefore, $\Tr(\theta_p(\varphi_i)\sigma_{\gamma^{-1}}(p))\to 0$ for every $\gamma\in\Gamma$, and hence $\hat{\varphi_i}\to 0$ pointwise.

\item Since $\Phi:B_2(\Lambda)\to B_2(\Gamma)$ is a linear contraction, the dual $\Phi^*:B_2(\Gamma)^*\to B_2(\Lambda)^*$ is continuous. Therefore, to prove that $\Phi$ is $\sigma(B_2(\Lambda),Q(\Lambda))$-$\sigma(B_2(\Gamma),Q(\Gamma))$ continuous, it suffices to show that $\Phi$ maps $Q(\Gamma)$ into $Q(\Lambda)$. To this end, notice that a similar argument as in the proof of previous part shows that $\Phi:\ell^\infty\Lambda\to\ell^\infty\Gamma$ is normal, whence it follows that the dual map $\Phi^*$ maps $\ell^1\Gamma$ into $\ell^1\Lambda$. Since $\ell^1\Gamma$ and $\ell^1\Lambda$ are dense, respectively, in $Q(\Gamma)$ and $Q(\Lambda)$, it follows that $\Phi^*(Q(\Gamma))\subset Q(\Lambda)$.
\end{enumerate}
\end{proof}

\begin{proof}[Proof of Theorem \ref{main theorem}]
Suppose $\Lambda_{\mathrm{cb}}(\Lambda)\leq C$, and choose a net $\{\varphi_i\}$ of finitely supported functions on $\Lambda$ such that $\sup_i\|\varphi_i\|_{B_2}\leq C$, and $\varphi_i\to 1$ pointwise. It follows from Lemma \ref{main lemma} that $\hat{\varphi_i}\in B_2(\Gamma)$ and $\|\hat{\varphi_i}\|_{B_2}\leq C$ for every $i$. Since each $\varphi_i$ is finitely supported, we have that $\hat{\varphi_i}$ is a coefficient of the Koopman representation $\sigma^0:\Gamma\to\cU(L^2(\cM,\Tr))$. Moreover, the existence of a fundamental domain for $\Gamma$ implies that $\sigma^0$ is a multiple of the left-regular representation \cite[Proposition 4.2]{IPR} and hence $\hat{\varphi_i}\in A[\Gamma]$ for all $i$. From Proposition \ref{main propositon}, we also have that $\hat{\varphi_i}\to 1$ pointwise. This shows that $\Lambda_{\mathrm{cb}}(\Gamma)\leq C$, and $\Gamma$ is weakly amenable.

If the net $\{\varphi_i\}$ is in $c_0(\Lambda)\cap B_2(\Lambda)$, then by Lemma \ref{main lemma}\eqref{item:B}, $\hat{\varphi_i}\in c_0(\Gamma)\cap B_2(\Gamma)$ for every $i$. Now the same argument as in the previous paragraph shows that if $\Lambda$ has the weak Haagerup property, then $\Gamma$ has the weak Haagerup property and $\Lambda_{\mathrm{wcb}}(\Gamma)\leq \Lambda_{\mathrm{wcb}}(\Lambda)$.

Note that $\theta_p(\varphi)p=\varphi(e)p$. Therefore, if $\varphi(e)=1$, then $\hat{\varphi}(e)=1$. Moreover, if $\varphi$ is a normalized positive definite function, then there exists a Hilbert space $\cH_\varphi$, a unitary representation $\pi_\varphi:\Lambda\to\cU(\cH_\varphi)$, and a unit vector $\xi_\varphi\in\cH_\varphi$ such that $\varphi(s)=\langle\pi_\varphi(s)\xi_\varphi,\xi_\varphi\rangle, s\in\Lambda$ \cite[Theorem 2.5.11]{BO08}. In particular, $\|\varphi\|_\infty\leq 1$. Therefore, the arguments in the previous paragraphs show that the Haagerup property is invariant under von Neumann equivalence.

If $\Lambda$ has (AP), then $1$ is in the $\sigma(B_2(\Lambda),Q(\Lambda))$-closure of finitely supported functions on $\Lambda$. From the first paragraph we have that if $\varphi$ is a finitely supported function on $\Lambda$, then $\hat{\varphi}\in A[\Gamma]$. Therefore, Proposition \ref{main propositon} gives that $1$ is in the $\sigma(B_2(\Gamma),Q(\Gamma))$-closure of $A[\Gamma]$ inside $B_2(\Gamma)$, whence it follows that $\Gamma$ has (AP).
\end{proof}

\bibliographystyle{amsalpha}
\bibliography{reference}

\end{document}